\documentclass[a4paper,10pt]{amsart}
\usepackage[latin1]{inputenc}  
\usepackage[T1]{fontenc} 
\parindent=0pt
\usepackage{verbatim}
\usepackage{amsmath}
\usepackage{amssymb}
\usepackage{amscd}
\usepackage[arrow,matrix,curve,ps]{xy}
\usepackage{tikz}
\usepackage{hyperref}
\xyoption{all}
\input xy
\newcommand{\C}{\mathbb{C}}
\newcommand{\A}{\mathbb{A}}
\renewcommand{\P}{\mathbb{P}}
\newcommand{\R}{\mathbb{R}}

\newcommand{\G}{\mathbb{G}}
\newcommand{\Q}{\mathbb{Q}}
\newcommand{\N}{\mathbb{N}}
\newcommand{\Z}{\mathbb{Z}}

\def \maps#1 {\ {\buildrel #1  \over \mapsto }\ }

\newtheorem{theorem}{Theorem}[section]
\newtheorem{lemma}[theorem]{Lemma}
\newtheorem{corollary}[theorem]{Corollary}
\newtheorem{proposition}[theorem]{Proposition}
\newtheorem{definition}[theorem]{Definition}

\theoremstyle{definition}
\newtheorem{remark}[theorem]{Remark}
\newtheorem{example}[theorem]{Example}
\newtheorem{question}[theorem]{Question}

\title{Motives of graph hypersurfaces with torus operations}
\author{Stefan M\"uller-Stach} \author{Benjamin Westrich}
\email{mueller-stach@uni-mainz.de} \email{b.westrich@gmx.net}
\address{Institut f\"ur Mathematik, Fachbereich 08, Johannes Gutenberg-Universit\"at Mainz, 55099 Mainz, Deutschland}

\begin{document}

\begin{abstract}
We study conditions under which graph hypersurfaces 
admit algebraic torus operations. This leads in principle to a computation of graph motives 
using the theorem of Bialynicki-Birula, provided one knows the fixed point loci in a resolution of singularities.  
\end{abstract}

\maketitle

\section*{Introduction}

Feynman diagrams and their amplitudes are of fundamental importance in perturbative quantum field theory. Extensive calculations 
of these amplitudes for graphs of low loop numbers by Broadhurst and Kreimer in \cite{BKR1} and \cite {BKR2} revealed the motivic nature 
of these amplitudes, showing that in many cases they are expressible as rational linear combinations of multiple zeta values. This brought up the question whether
all Feynman amplitudes evaluate to multiple zeta values. By general principles \cite{DG,L}, this would mean that 
Feynman amplitudes are periods of mixed Tate motives. Kontsevich \cite{Kont} related this to point counting on the hypersurface 
defined by the singularities of the integrand in the Feynman amplitude. Despite the empirical evidence created by Stembridge in \cite{Stem}, 
Belkale and Brosnan showed that the point counting function for general graph hypersurfaces is not of polynomial type, in fact, 
all point counting functions can be expressed in terms of those of graph hypersurfaces \cite{BelBros}. 
Bloch, Esnault and Kreimer \cite{BEK} investigated the foundations of Feynman amplitudes and their relations to periods of mixed Hodge structures,
and studied the mixed Hodge structure of the middle cohomology for wheel-type graphs. Explicit graphs not of mixed Tate type have first been found by 
Brown-Schnetz \cite{Brown, BroSchn} and Doryn \cite{Dor}. 

The intention of this paper is to explore torus operations on graph hypersurfaces $X_\Gamma$ and their non-singular models, 
and to provide a set-up for studying the resulting motive using the theorem of Bialynicki-Birula \cite{BB}. 
In section \S 1, we give criteria for the existence of algebraic torus operations. In \S 2, we 
focus on a particular class of graphs, obtained by a glueing process, where the torus operation is evident. In \S 3, we use the derived category $DM(k)$ of motives and 
apply the theorem of Bialynicki-Birula in a motivic context in order to study the motive of $X_\Gamma$. The presence of a torus operation 
reduces the complexity of the motive of $X_\Gamma$ with this method to that of the fixed point loci in some resolution of singularities.

\section{Existence of torus operations on graph hypersurfaces}

\begin{definition} Let $\Gamma$ be a finite, connected, not necessarily simple graph. The graph polynomial $P_\Gamma$ of $\Gamma$ is defined as 
$$
P_\Gamma:= \sum_\tau \prod_{e \notin \tau} X_e,
$$
where $\tau$ runs through all spanning trees of $\Gamma$, and $X_e$ is a polynomial variable for each edge $e \in E(\Gamma)$. 
The polynomial $P_\Gamma$ is homogenous of degree $h=h_1(\Gamma)$ \cite{BEK}. We define the graph hypersurface 
$$
X_\Gamma:=\{P_\Gamma=0 \} \subset \P^{n-1}, \quad n = \sharp E(\Gamma).
$$ 
\end{definition}

In \cite{BEK}, this polynomial was rewritten in terms of a determinant of a symmetric $(h \times h)$-matrix $M_\Gamma$ with linear entries. 
Since much of this paper relies on this description we will repeat it here. For $\Gamma$ we choose an orientation of its edges. 
Define a map $\partial \ : \ \Z^{E(\Gamma)} \rightarrow \Z^{V(\Gamma)}$, by $e \mapsto \sum_{v \in V(\Gamma)} {\rm sgn}(v,\, e) v$, 
where ${\rm sgn}(v,\,e) = 1$ if $v$ is the source of the edge $e$, further ${\rm sgn}(v,\, e) = -1$ if $v$ is the target of $E$. 
This gives rise to a simplicial complex $\Z^{E(\Gamma)} \stackrel{\partial}{\rightarrow} \Z^{V(\Gamma)}$ and a corresponding exact sequence 
\[0 \rightarrow H_1(\Gamma,\, \Z) \stackrel{\iota}{\rightarrow} \Z^{E(\Gamma)} \rightarrow \Z^{V(\Gamma)} \rightarrow H_0(\Gamma,\, \Z) \rightarrow 0.\] 
Let $l_e(\cdot),\, e \in E(\Gamma)$, denote the dual basis of the standard basis of all edges 
$e \in E(\Gamma) \subseteq \Z^{E(\Gamma)}$. Then we can consider the bilinear forms $q_e$ of rank $1$ given by 
\[
q_e:=(l_e \circ \iota) \cdot (l_e \circ \iota) \ : \ H_1(\Gamma,\, \Z) \times H_1(\Gamma,\, \Z) \rightarrow \Z.
\]
Choose a basis $B=(c_1,\, \ldots,\, c_{h_1(\Gamma)})$ of $H_1(\Gamma,\,\Z)$, let $M_e = M_e(B)$ be the Gram matrix associated to $q_e$, and set
\[
M_{\Gamma,B} := \sum_{e \in E(\Gamma)} X_eM_e \in \Z[X_e\colon  e \in E(\Gamma)]_1 \otimes_{\Z} {\rm End}(\Z^{h_1(\Gamma)}).  
\]
Here, $\Z[X_e\colon e \in E(\Gamma)]_1$ denotes the degree $1$ part of the algebra $\Z[X_e:  e \in E(\Gamma)]$. 
We will usually abuse the notation and write $M_\Gamma$ without the basis in the subscript.

\begin{lemma}\label{BEKMatrix} One has $P_\Gamma=\pm \det(M_\Gamma)$.
\end{lemma}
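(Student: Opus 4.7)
The plan is to realize $M(\Gamma)$ as a Gram-type factorization and then apply the Cauchy--Binet formula, identifying the surviving terms with spanning trees of $\Gamma$.

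First I would fix a basis $B = \{b_1,\ldots,b_{h_1}\}$ of $H_1(\Gamma,\Z)$, so that $\iota$ is encoded as an $n \times h_1$ integer matrix $N$ whose $e$-th row $N_e$ represents the linear form $l_e \circ \iota$ in coordinates. Unpacking the definitions, $M_e$ equals the outer product $N_e^t N_e$, and therefore
\[
M(\Gamma) \;=\; \sum_{e \in E(\Gamma)} X_e\, N_e^t N_e \;=\; N^t D\, N, \qquad D := \mathrm{diag}\bigl(X_e : e \in E(\Gamma)\bigr).
\]

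Next, applying the Cauchy--Binet formula to this $h_1 \times h_1$ product gives
\[
\det M(\Gamma) \;=\; \sum_{\substack{S \subset E(\Gamma) \\ |S| = h_1}} \Bigl(\prod_{e \in S} X_e\Bigr) \det(N_S)^2,
\]
where $N_S$ is the $h_1 \times h_1$ submatrix of $N$ whose rows are indexed by $S$. Comparing with the definition of $P(\Gamma)$, it suffices to show that $\det(N_S)^2$ equals $1$ whenever $E(\Gamma) \setminus S$ is a spanning tree of $\Gamma$, and vanishes otherwise.

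For the vanishing direction, I would observe that $N_S$ computes the composition $H_1(\Gamma,\Z) \xrightarrow{\iota} \Z^{E(\Gamma)} \twoheadrightarrow \Z^{S}$, whose kernel is precisely $H_1(\Gamma',\Z)$ for the subgraph $\Gamma' := (V(\Gamma), E(\Gamma) \setminus S)$. Since $|E \setminus S| = |V(\Gamma)| - 1$ by the formula $h_1 = |E| - |V| + 1$ for connected $\Gamma$, the subgraph $\Gamma'$ is acyclic if and only if it is a spanning tree. For the non-vanishing case, I would switch to the tree-cycle basis of $H_1(\Gamma,\Z)$ adapted to the spanning tree $T := E \setminus S$: each $e \in S$ determines a unique cycle $c_e$ in $T \cup \{e\}$ supported on $T$ together with $e$, contributing $\pm 1$ in the $e$-coordinate and $0$ in the other coordinates indexed by $S$. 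Hence $N_S$ becomes a signed permutation matrix, so $\det(N_S) = \pm 1$; since any change of basis on $H_1(\Gamma,\Z)$ lies in $GL_{h_1}(\Z)$, the square $\det(N_S)^2 = 1$ is independent of $B$. Summing over $S$ yields $\det M(\Gamma) = \sum_T \prod_{e \notin T} X_e = P(\Gamma)$.

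The main obstacle I anticipate is the clean identification carried out in the last paragraph: the equality $|E \setminus S| = |V| - 1$ uses connectedness of $\Gamma$, and showing $|\det N_S| = 1$ (not merely that it is nonzero) requires exhibiting an actual basis in which $N_S$ is recognizable; the tree-cycle basis achieves this transparently, after which basis-change invariance of the square $\det(N_S)^2$ promotes the statement to an arbitrary basis $B$.
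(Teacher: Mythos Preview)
Your argument is correct: the factorization $M(\Gamma)=N^{t}DN$ with $D=\mathrm{diag}(X_e)$ followed by Cauchy--Binet is exactly the right mechanism, and your identification of the nonvanishing minors with complements of spanning trees via the fundamental-cycle basis is clean and complete.

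The paper itself does not supply a proof here; it simply refers the reader to \cite{BEK}, Proposition~(2.2). So there is no argument in the paper to compare against directly. For what it is worth, the proof in \cite{BEK} proceeds along the same lines as yours---Cauchy--Binet applied to the Gram factorization, followed by a spanning-tree interpretation of the surviving minors---so your write-up is a faithful and self-contained reconstruction of the cited result rather than an alternative route.
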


\begin{proof} 
See [\cite{BEK},\, Proposition (2.2)].
\end{proof} 

In this description of $M_\Gamma$, the diagonal entries contain sums of variables $X_e$ of edges $e$ contained in cycles 
in the basis $B$ of $H_1(\Gamma,\Z)$. The entries $M_{ij}$ of $M_\Gamma$ with $i<j$ contain variables $X_e$ of edges which 
form the glueing data for the basis elements $c_i, c_j \in B$. 

\begin{lemma} \label{MatrixStructure}
The diagonal entries of $M_{\Gamma}$ generate a free $\Z$-submodule of rank $h_1(\Gamma)$ in $\Z[X_e\colon e \in E(\Gamma)]_1$ with free complement. 
In particular, if we tensor the same $\Z$-module with an arbitrary field $k$, then the diagonal entries remain $k$-linear independent.
\end{lemma}

\begin{proof} There is an isomorphism of free modules $\Z^{E(\Gamma)} \longrightarrow \Z[X_e\colon e \in E(\Gamma)]_1$ under the assignment $e \mapsto X_e$. 
Also, there is a basis $B = (c_1,\, \ldots,\, c_{h_1(\Gamma)})$ of the free submodule $H_1(\Gamma,\, \Z) \subset \Z^{E(\Gamma)}$, where each $c_i$ is a cycle in $\Gamma$.
The entries of $M_{\Gamma,B}$ on the diagonal are precisely the sums $\sum_{e \in c_i} X_e$. They generate a free submodule $H$ of 
$\Z[X_e\colon e \in E(\Gamma)]_1$, and we obtain an isomorphism of free submodules 
\[
H_1(\Gamma,\, \Z) \longrightarrow H.
\]
Any spanning tree $\tau$ in $\Gamma$ induces a free submodule $T$ in $\Z^{E(\Gamma)}$ generated by the edges of $\tau$. Since 
the union of $\tau$ with any edge outside of $\tau$ contains a cycle, this free submodule is complementary to $H_1(\Gamma,\, \Z)$. This shows that 
$H$ has a free complement inside $\Z[X_e\colon e \in E(\Gamma)]_1$. 
\end{proof}

\begin{remark} \label{SpanningTrees}
Subdivision of edges gives rise to affine fiber bundles over $X_{\Gamma}$: Let $\Gamma'$ be the graph obtained 
from $\Gamma$ by subdividing the edge $e$ into $e_1$ and $e_2$. Then $P_{\Gamma'} = P_\Gamma|_{X_e = X_{e_1}+X_{e_2}}$. 
\end{remark}

From now on, we choose a field $k$ of characteristic $\neq 2$, and consider $M_\Gamma$ as a matrix in $k[X_e\colon e \in E(\Gamma)]_1$ after extending scalars.
Then the diagonal entries remain linearly independent over $k$. 

We would like to study torus operations on the graph hypersurface 
\[
X_\Gamma=\{P_\Gamma=0\} \subset \P^ {n-1}.
\]
Let us start with some preliminary remarks on torus operations in general.

\begin{definition}[Torus operation]
A linear operation of the $r$-dimensional torus $\G_m^r$ on $\P^{n-1}$ is given by a homomorphism
of group varieties (over $k$) 
\[
\G_m^r \longrightarrow  {\rm PGL}_n.
\]
We define a torus operation of an $r$-dimensional torus $\mathbb{G}_m^r$ to be of rank $s$ on a closed subvariety $X \subset \P^{n-1}$, 
if the image under the associated morphism
\[
\G_m^r \longrightarrow {\rm Aut}(X) \cap {\rm PGL}_n
\]
is of dimension $s$. An operation is called faithful, if it is injective. 
In particular, a faithful operation of an $r$-dimensional torus on a variety $X$ is of rank $r$.
We denote by ${\rm Fix}_{X}(\G_m^r)$ the fixed point locus of the operation on $X$. 
\end{definition}

\begin{remark}
(a) In our situation, all tori are split over $k$, i.e., isomorphic to $\G_m^r$, which is why we defined them in this restricted way. \\
(b) Any linear operation of a torus is diagonizable, i.e., the image is conjugate (w.r.t. linear transformations) to a subgroup of the 
maximal standard torus in ${\rm PGL}_n(k)$.\\
(c) Consider an operation $\G_m =: T \rightarrow {\rm Aut}(X)$, where $X$ is projective. 
Assume that it is the restriction of a linear, diagonal operation on some embedding $X \hookrightarrow \P^{n-1}$. 
Then the operation $T$ is determined by a vector $\eta=(\eta_1,...,\eta_n)$ of integers - usually called the weights of the operation - such that 
the operation is given by $X_i \mapsto t^{\eta_i} X_i$ for $1 \le i \le n$. 
\end{remark}

From now on, we assume that $X=\{f=0\} \subset \P^{n-1}$ is a hypersurface. We assign to each such $f$ a $\Z$-module as follows:

\begin{definition}[Weight lattice]
\label{lattices}
Let $s \in \N_0$ and $k[X_1,\ldots,\, X_n]_s$ be the vector space of homogeneous polynomials in $n$ variables of degree $s$. 
For any 
\[
f = \sum_{|\alpha| = s} c_{\alpha}X^{\alpha} \in k[X_1,\, \ldots,\, X_n]_s
\]
we call 
\[
\Lambda_f:= \left\{ \eta \in \Z^n \ : \ \eta \cdot \alpha = c  \text{ independent of } \alpha  \text{ for any } \alpha \text{ with }  c_{\alpha} \neq 0\right\}
\]
the weight lattice of the polynomial $f$. We will call the elements of $\Lambda_f$ 
the weight vectors of $f$. Each weight vector $\eta \in \Lambda_f$ gives rise to a $\G_m$-operation on $X=\{f=0\} \subset \P^{n-1}$ given 
by $X_i \mapsto t^{\eta_i} X_i$. An operation corresponding to a weight vector $\omega$ if trivial on $\P^{n-1}$ if and only 
if $\omega \in \Z \cdot (1,\ldots,1)$. 
Since we consider homogeneous polynomials $f$, we have $\Z \cdot (1,\ldots,1) \subseteq \Lambda_f$.
Thus, the weight lattice $\Lambda_f$ has rank $r+1$ if and only if $X$ admits a linear rank $r$ torus operation in the coordinates $X_i$.
\end{definition}

\begin{proposition} \label{mainexample}
Let $M$ be the generic symmetric $(h \times h)$-matrix over the polynomial ring $k[Y_{ij} \mid 1 \leq i \leq j \leq h]$. 
Its entries are $M_{ij}=Y_{ij}$ for $i \le j$ and $M_{ij}=Y_{ji}$ for $i>j$. Then the number of variables is $n=\binom{h+1}{2}$, and 
$f=\det(M)$ has the weight lattice 
\[
\Lambda_h = \{ \omega = (\omega_{ij})_{i \leq j} \in \Z^n \ : \ 2\omega_{ij} = \omega_{ii}+\omega_{jj} \text{ for all } i<j \}.
\]
The lattice $\Lambda_h$ has rank $h$, since an element is already determined by the diagonal entries. The corresponding torus operation on $X=\{\det(M)=0\}$ 
is given by 
\[
Y_{ij} \mapsto t_it_jY_{ij} 
\]
for an element $(t_1,\ldots,t_h)$ in the diagonal torus $\G_m^{h-1} \subset {\rm PGL}_h$ of rank $h-1$. 
\end{proposition}

\begin{proof} The explicitly defined torus operation above shows immediately that $\Lambda_h \subseteq \Lambda_{\det(M)}$. To prove the converse, one looks at 
\[
\det(M)=\sum_{\sigma \in \Sigma_h} {\rm sgn}(\sigma) Y_{1\, \sigma(1)} \cdots Y_{h\, \sigma(h)}.
\]
The permutation $\sigma={\rm id}$ corresponds the monomial $Y_{1\, 1} \cdots Y_{h\, h}$ in $\det(M)$, the transposition $(ij)$
corresponds to $Y_{1\, 1} \cdots Y_{i\, j} \cdots Y_{j\, i} \cdots Y_{h\, h}$. Therefore, the definition of the weight lattice $\Lambda_{\det(M)}$ implies that 
$\Lambda_{\det(M)} \subseteq \Lambda_h$. 
\end{proof}

Let us now look at the special case where the polynomial $f=\det(M)$ is the determinant of a symmetric $(h \times h)$-matrix $M$ with 
linear homogenous polynomials in $k[X_1,\ldots,\, X_n]$ as entries. A useful invariant is: 

\begin{definition}
Let $\ell(M)$ be the $k$-dimension of the span of all non-zero entries $M_{ij}$ in the upper triangle of $M$. (By the upper triangle we mean 
the entries $M_{ij}$ of $M$ with $i \leq j$.)
\end{definition}

We now construct torus operations on such determinant hypersurfaces. We first look at the special case where all 
entries are linearly independent linear homogenous polynomials and the matrix is full.

\begin{proposition} \label{GenericOperation}
Suppose $M=(M_{ij}) \in k[X_1,\ldots,\, X_n]^{h\times h}$ is a full symmetric matrix, such that the entries $M_{ij}$ in the upper 
triangle are all non-zero and linearly independent linear homogenous polynomials. Assume also that $n=\ell(M)=\binom{h+1}{2}$ 
is the number of all entries. Consider the maximal torus $\mathbb{G}_m^{h-1}(k) \subset {\rm PGL}_h(k)$
given by diagonal matrices. Then the variety 
\[
X := \{{\rm det}(M) = 0\} \subseteq \P^{n-1}
\]
carries a $\mathbb{G}_m^{h-1}$-operation of rank $h-1$. 
\end{proposition}

\begin{proof} By assumption, there is an isomorphism
\[
k[Y_{ij} \mid i \leq j] {\buildrel \cong \over \longrightarrow}  k[X_1,\ldots,\, X_n]
\]
by substitution, and hence a linear $k$-isomorphism
\[
{\rm Proj}\, k[X_1,\ldots,\, X_n]  {\buildrel \cong \over \longrightarrow} {\rm Proj}\, k[Y_{ij} \mid i \leq j]
\]
between projective spaces of dimension $n-1$.
Thus we may work with the $Y_{ij}$-coordinates and may assume that we are in the situation of Example~\ref{mainexample}. 
Define a torus operation on ${\rm Proj}\, k[Y_{ij} \mid i \leq j]$
by $Y_{ij} \mapsto t_it_j Y_{ij}$.  We have to show that 
this operation is of rank $h-1$. But the point $P=(1:\ldots:1)$ lies 
on $X$ and its orbit is given by the image of the morphism
\[
\varphi: (t_1,\ldots,t_h) \mapsto (t_1^2:\ldots:t_h^2:t_1t_2: \ldots) \in X. 
\]
Taking the differential of $\varphi$ at $P$, we see that it is an immersion. Hence $\dim {\rm Im}(\varphi)=h-1$.   
\end{proof}

In general, one has $n \ge \ell(M)$ and we get a slightly more general result for matrices which have some vanishing entries off the diagonal 
but all non-zero entries are linearly independent linear homogenous as above.

\begin{theorem}\label{Existence}
Let $M =(M_{ij}) \in k[X_1,\ldots,\, X_n]^{h\times h}$ be a symmetric matrix such that all non-zero entries $M_{ij}$ for $i \le j$ 
are linearly independent linear homogenous polynomials, and all diagonal entries $M_{ii}$ are non-zero. Then the hypersurface 
\[
X:=\{\det(M)=0 \} \subset \P^{n-1}
\]
admits a linear $\mathbb{G}_m^r$-operation with $r=h-1+ n -\ell(M)$.

This operation is of rank $r$, if there is a point $P \in X$ such that $P$ is not contained in the union of the linear hypersurfaces defined 
by the diagonal entries of $M$. 
\end{theorem}

\begin{remark} For the torus operation defined in Example~\ref{GenericOperation}, the number $r=h-1+ n -\ell(M)$ is 
maximal with this property. However, there may be examples with extra operations, see example~\ref{extra_example}.
\end{remark}

\begin{proof} Let us first assume that $n=\ell(M)$. 
As in the proof of Prop.~\ref{GenericOperation}, we may work with the variables $Y_{ij}$, and assume that $X=\det(Y_{ij})$,
where some variables $Y_{ij}$ for $i \neq j$ are set to be zero. The $\G_m^{h-1}$-operation from Prop.~\ref{GenericOperation}
can be restricted to $X$, since $X$ is the zero locus of the $\G_m^{h-1}$-invariant hyperplanes $Y_{ij}=0$. 
Therefore, the determinantal hypersurface $X$ admits an operation of $\G_m^{h-1}$ defined by the weight lattice $\Lambda_h$ from Prop.~\ref{GenericOperation}.
To show that the operation is still of rank $h-1$ in this case, where some entries vanish, look at the given point $P=(P_{i,j}) \in X$. 
Let $\Sigma$ be the set of all indices $i \le j$ such that the entry $M_{ij}$ in $M$ is non-zero.  
Consider the morphism
\[
\varphi_\Sigma:  \mathbb{G}_m^{h-1} \longrightarrow X \subset \P^{n-1}, \quad (t_1,...,t_h) \mapsto (P_{i,j}t_it_j)_{(i,j) \in \Sigma}  \in X.
\]
Differentiating at $P$ as in Prop.~\ref{GenericOperation}, we see that the Jacobi matrix of $\varphi_\Sigma$ contains
a diagonal submatrix of rank $h$, since all $P_{ii}$ are non-zero by assumption. 

Suppose now that $n>\ell(M)$. The matrix $M$ defines a $k$-linear surjection 
\[
q: \tilde X \longrightarrow \tilde X_\Sigma
\]
of the affine cone $\tilde X$ over $X$ to $\tilde X_\Sigma$, the affine cone of $X_\Sigma \subset \P^{\ell(M)-1}$ which is the 
determinantal hypersurface defined by the symmetric matrix with non-zero entries $Y_{i,j}$ for $(i,j) \notin \Sigma$. 
Since $q$ is induced by the projection 
\[
\A^n \longrightarrow \A^{\ell(M)},
\]
the morphism $q$ is a trivial vector bundle of rank $n-\ell(M)$ whose fibers are linearly embedded 
in $\A^{n}$. We have already shown that $\tilde X_\Sigma$ admits a rank $h-1$ torus operation. 
The $\G_m^{h-1}$-operation on $X_\Sigma$ induces a $\G_m^{h-1} \times \G_m^{n-\ell(M)}$ 
operation on $\tilde X=\tilde X_\Sigma \times \A^{n-\ell(M)}$ which is of rank $r$ when restricted to $X$.
\end{proof}
 
\begin{example} Wheels $WS_h$ with $h$ spokes and $2h$ edges satisfy Theorem~\ref{Existence}, since 
\[
M(WS_h)= \left(\begin{matrix} Y_{11} &  -X_2 & 0 & \cdots & -X_1\cr 
-X_2 & Y_{22} & -X_3 & \cdots & 0 \cr 
0 & -X_3 & \ddots  & \ddots & 0 \cr 
\vdots & & \ddots & \ddots & \vdots \cr 
0 & \dots && Y_{h-1,h-1} &  -X_h \cr 
-X_1 & 0 & \cdots & -X_h & Y_{hh} 
\end{matrix} \right),
\] 
with $Y_{ii}=X_i+X_{i+1}+X_{h+i}$. Here, $i+1$ is to be considered$\mod h$. 

As a consequence, the associated hypersurfaces $X_h$ admits a torus 
operation of rank $h-1$. This bound is sharp, e.g. in the case $h=3$, the hypersurface
$X_3 \subseteq \P^5 \simeq \P(\Gamma(\P^2,\, \mathcal{O}_{\P^2}(2)))$ is the complement of the 
$5$-dimensional homogenous space $PSL_3(\C)/SO_3(\C)$, which admits a 
rank $2$ torus operation. There is no larger linear torus operation, since the group $PSL_3(\C)$ is the stabilizer of 
$\P^5 \backslash X_3$ in ${\rm PSL}_6(\C) = {\rm Aut}(\P^5)$ and it has rank $2$ (see \cite{B}).
\end{example}

In general, the condition of linear independence in Theorem~\ref{Existence} is too restrictive. 
We need to define a new invariant for symmetric matrices $M$ to formulate a more general result.
The proof of Theorem~\ref{Existence} then implies much more as we will see now.

Let us first fix a certain normal form that we need to formulate the setting and the result in an economical manner. 
One can always pass from $M$ to a certain normal form by a unique linear transformation as follows. 
Let $M \in k[X_1,\ldots,\, X_n]_1^{h\times h}$ be a symmetric matrix of linear forms 
such that all diagonal entries are non-zero and linearly independent.
As above, we denote by $\ell(M)$ the dimension of the span of all upper-triangular entries. 
The $h$ diagonal entries of $M$ are linearly independent by assumption, so we label them (in this order) by $X_1,...,X_h$. 
Then we pass to the next parallel diagonal with $i=j-1$. If the entry $M_{12}$ is linearly independent of $X_1,...,X_h$, then we replace it by $X_{h+1}$, otherwise
it is a linear form $L_{12}(X_1,...,X_h)$. We continue in the obvious way by going from top to bottom in all diagonals in the upper triangle. For the entries below 
the diagonal we take the mirror image. Each non-zero entry $M_{ij}$ of $M$ is either a variable 
$X_1,...,X_{\ell(M)}$, if it occurs for the first time, or a linear form $L_{ij}(X_1,...,X_{\ell(M)})$ in those variables. If $L_{ij}$ equals a repeated variable
(which may happen), we nevertheless call it a linear form. Hence, the entries which are called ''variables'' are the first occurences in the chosen ordering. 
We say that the resulting symmetric matrix is in {\it quasi-lexicographic normal form}. 
Note that passing from $M$ to its normal form is a unique algorithm. 

\begin{definition} Let $M$ be in quasi-lexicographic normal form.
We define an equivalence relation on indices $(ij)$ ($i \le j$) of the non-zero entries $M_{ij}$ as the transitive hull of the 
symmetric relation given by 
\[
(ij) \sim (kl) \Leftrightarrow \text{ a common variable } X \in \{X_1,...,X_{\ell(M)} \} \text{ occurs in } M_{ij} \text{ and } M_{kl}.
\] 
The equivalence classes are called clusters.  \\
An element $(ij)$ with $i<j$ in a cluster $C$ is called excessive, if $X_i$ or $X_j$ do not occur in $L_{ij}(X_1,...,X_{\ell(M)})$.
Let 
\[
\delta(M):=\sum_{\text{clusters }C} (|C|-1) + \sharp \text{ excessive entries in } M
\]
be the excess of $M$.
\end{definition}

\begin{theorem}\label{Existence2} 
Let $M$ be in quasi-lexicographic normal form. Then the hypersurface
$$
X:=\{\det(M)=0 \} \subset \P^{n-1},
$$
admits a rank $r$ torus operation which is diagonal in the variables $X_1,...,X_n$, and where
\[
r \geq \max\left(0,h-1 +n- \ell(M) - \delta(M) \right),
\]
if there is a point $P \in X$ such that $P$ is not contained in the union of the linear hypersurfaces defined 
by the diagonal entries of $M$.
\end{theorem}

\begin{proof} By our convention, all variables $X_1,\ldots,X_{\ell(M)}$ occur for the first time at a unique position $M_{ij}$ in $M$,  
and $X_1,\ldots,X_h$ are the diagonal entries. 
Substituting new variables $Y_{ij}$ for each remaining linear form $L_{ij}(X_1,...,X_{\ell(M)})$, we arrive at an inclusion
\[
i: \P^{n-1} \hookrightarrow \P^{N+n-\ell(M)-1},
\]
where $N-\ell(M)$ is the number of additional variables $Y_{ij}$ with $i<j$. This inclusion maps $X$ to a codimension $N-\ell(M)+1$ subvariety  
\[
X'=i(X) = \{\det(M')=0\} \bigcap \{H_{ij}=0\},
\] 
where $M'$ is the matrix obtained by the same substitutions, and $H_{ij}$ are the linear hyperplanes 
\[
H_{ij}= Y_{ij} - L_{ij}(X_1,...,X_{\ell(M)}).
\]
Theorem~\ref{Existence} implies the existence of a torus $T$ of rank $\ge h-1+ N+n -\ell(M) - \ell(M') =h-1 + n-\ell(M)$ acting on $\{\det(M')=0\}$. 
Now we count conditions to estimate the minimal dimension dimension of a torus stabilizing $X'=i(X)$. For the variables $X_i$ in each cluster $C$
to have equal weight amounts to at most $|C|-1$ conditions. The weights $\omega_{ij}$ of the new variables $Y_{ij}$ with $i>j$ are related to the weights of the
diagonal entries by the formula $2\omega_{ij}=\omega_{ii}+\omega_{jj}$. Hence, if $(ij)$ is not excessive, one has $\omega_{ij}=\omega_{ii}=\omega_{jj}$ which 
satisfies the formula. If $(ij)$ is excessive, then the equation $2\omega_{ij}=\omega_{ii}+\omega_{jj}$ 
imposes one new extra condition on the weights $\omega_{ii}$ and $\omega_{jj}$. 

In total, this gives $\delta(M)$ conditions, and hence we obtain a torus operation of rank 
$\ge n-\ell(M)+h-1 -\delta(M)$. 
\end{proof}

\begin{remark} 
We cannot prove that the coordinate system suggested in our proof does always yield a torus operation of the highest possible
rank. For example, there could be an operation which is not diagonal in our chosen coordinates, or the cluster conditions are not independent. 
The latter would be detected in the computations of the weights following the algorithm implicit in the proof though. 
Therefore, the bounds in this theorem are not sharp. We provide a corresponding example below. 
\end{remark}

\begin{example} \label{extra_example} Consider the graph
which is the wheel with $3$ spokes with one additional triangle subdivided (see figure below). This gives rise to the matrix 
\[
M = \left(\begin{matrix} 
X_2+X_6+X_8 & X_2+X_6 & -X_2 & X_2\cr 
X_2+X_6 & X_1+X_2+X_4+X_6+X_7 & -X_1-X_2-X_4 & X_1+X_2\cr 
-X_2 & -X_1-X_2-X_4 & X_1+X_2+X_4+X_5 & -X_1-X_2 \cr 
X_2 & X_1+X_2 & -X_1-X_2 & X_1+X_2+X_3 
\end{matrix}\right).
\]

Substituting as in Theorem \ref{Existence2} we arrive at 
\[
M= \left(\begin{matrix} Y_1 & Y_5 & Y_8 & -Y_8 \cr 
Y_5 & Y_2 & Y_6 & -Y_7 \cr
Y_8 & Y_6 & Y_3 & Y_7 \cr 
-Y_8 & -Y_7 & Y_7 & Y_4 
\end{matrix}\right).
\] 
Obviously we have two clusters of length $2$ and $6$ clusters of length $1$. 
By the theorem this means we can expect $X_\Gamma=\{{\rm det}(M)=0\} \subseteq \P^7$ to have no  torus operation. However, there is a $1$-dimensional operation 
given by the weight vector $\omega := (3,\, -1,\, -1,\, -1,\, 1,\, -1,\, -1,\, 1)$. The algorithm would give the same result, as $Y_7$ and $Y_8$ are in 
excessive positions but impose no extra relation. 
\end{example} 

\tikzstyle{every node}=[circle, draw, fill=black!50,inner sep=0pt, minimum width=4pt]
\[
\begin{tikzpicture}[thick,scale=0.8]
	\draw (-3,0)--(3,0)--(0,3)--(-3,0)--(0,1.5)--(1.5,0.75)--(-3,0)--(3,0)--(1.5,0.75)--(0,1.5)--(0,3);
\end{tikzpicture}
\]

\begin{lemma}
\label{Fixpoints}
Let $\Gamma$ be a graph such that the non-zero entries in the upper triangle of $M_\Gamma$ are linearly independent. 
Then, for any faithful operation of $T:=\mathbb{G}_m^{r}$ with $r=h-1+n-\ell(M)$ on $X_{\Gamma}$, as described in Theorem~\ref{Existence}, 
the variety ${\rm Fix}_{\P^{n-1}}(T)$ is zero-dimensional, and consists of points contained in $X_{\Gamma}$.
\end{lemma}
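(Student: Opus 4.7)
The strategy is to switch to coordinates in which the torus $T$ acts diagonally, describe the fixed locus of $T$ on the ambient projective space explicitly, and then check by hand that $\det M(\Gamma)$ vanishes at every fixed point.

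First I would perform the linear change of variables on $\mathbb{P}^{n-1}$ used at the beginning of the proof of Theorem~\ref{Existence}: since the $\ell(M)$ non-zero upper-triangular entries of $M(\Gamma)$ are $k$-linearly independent, we may pick new projective coordinates $Y_{ij}$ (one for each non-zero entry $M_{ij}$, $i\le j$) together with $n-\ell(M)$ auxiliary coordinates $Z_k$ that do not appear in any matrix entry. In these coordinates $M(\Gamma)$ is literally the symmetric matrix whose $(i,j)$-entry is $Y_{ij}$ (and zero wherever $M_{ij}\equiv 0$), so $X_\Gamma=\{\det M(Y)=0\}$ is a cone over the corresponding determinantal locus and involves none of the $Z_k$.

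Next I would read off the weights from Example~\ref{GenericOperation} and the inductive construction in Theorem~\ref{Existence}: $T$ acts diagonally on these new coordinates with $Y_{ij}$ of weight $\bar e_i+\bar e_j\in\Lambda_h/\Delta$ and $Z_k$ of weight $e_k$ in the complementary summand $\mathbb{Z}^{n-\ell(M)}$. The fixed locus of a diagonal torus action on projective space is the disjoint union, over characters $\mu\in X^\ast(T)$, of the projective subspaces spanned by the coordinates of weight $\mu$. Two $Y$-coordinates share a weight iff $e_i+e_j-e_k-e_l$ is a multiple of $(1,\ldots,1)$ in $\mathbb{Z}^h$; comparing coordinate sums ($2=2$ versus $ch$) forces the scalar to vanish, hence $\{i,j\}=\{k,l\}$. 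The $Z$-weights are pairwise distinct and distinct from all $Y$-weights because they lie in a complementary lattice summand. Consequently every weight is multiplicity-one, and $\mathrm{Fix}_{\mathbb{P}^{n-1}}(T)$ reduces to the finite set of coordinate points in $(Y,Z)$.

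Finally, I would evaluate $\det M$ at each of these coordinate points. At a $Z_k$-point every entry of $M$ vanishes; at a $Y_{ii}$-point $M$ equals the rank-one matrix $E_{ii}$; at a $Y_{ij}$-point with $i<j$ it equals the rank-two matrix $E_{ij}+E_{ji}$. In every case $M$ has rank $\le 2$, so $\det M=0$ as soon as $h=h_1(\Gamma)\ge 3$, which is the setting in which the lemma is used (the wheel $WS_h$, $h\ge 3$).

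The main step requiring care is the first one: making sure the change of variables genuinely conjugates the abstract torus $T$ from Theorem~\ref{Existence} into the advertised diagonal form with weights $\bar e_i+\bar e_j$ on the $Y_{ij}$ and $e_k$ on the $Z_k$. Once that is in place, the distinctness of weights is a one-line computation and the vanishing of $\det M$ at the coordinate points is a trivial rank estimate.
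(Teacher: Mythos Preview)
Your approach is essentially the same as the paper's: pass to coordinates $Y_{ij}$ (plus auxiliaries $Z_k$) in which the torus acts diagonally, identify the fixed locus as the set of coordinate points, and then check that $\det M$ vanishes there. The paper argues the middle step more informally (``choosing special values for $t_i$ and $t_j$ with $\prod_i t_i=1$, one sees that the fixed points are the standard basis points''), while you give the explicit weight computation; and the paper disposes of the last step with ``obviously these points are contained in $X_\Gamma$'', where you spell out the rank estimate $\mathrm{rk}\,M\le 2$.

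One point worth flagging: your observation that the conclusion needs $h_1(\Gamma)\ge 3$ is genuine and not merely cosmetic. For $h=2$ the coordinate point $Y_{12}$ gives $M=E_{12}+E_{21}$ with $\det M=-1\neq 0$, so the lemma as literally stated fails there; the paper's ``obviously'' tacitly assumes $h\ge 3$, which is indeed the only range in which the lemma is subsequently applied (e.g.\ to wheels $WS_h$). So your added hypothesis is a correction rather than a restriction.
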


\begin{proof} We may assume that $n=\ell(M)$, since the operation on the $n-\ell(M)$ extra variables is effective.  
By Example~\ref{GenericOperation}, the operation on the generic symmetric matrix with independent linear entries is given by $(t,\, x) \mapsto (t_it_jx_{ij})$. 
Choosing special values for $t_i$ and $t_j$ with $\prod_i t_i=1$, one sees that 
the fixed points in this case are just the points corresponding to the usual standard basis of the underlying space $\P^{N-1}$ with $N=\binom{h+1}{2}$. 
In our more general situation, the graph hypersurfaces are intersections of the generic zero set of the determinant of the generic symmetric matrix with ($T$-invariant) 
linear coordinate subspaces. Hence the fixed point set 
${\rm Fix}_{\P^{n-1}}(T)$ is given by points in $\P^{n-1}$ with exactly one non-zero entry, i.e., a vertex of the coordinate simplex. 
Obviously these points are contained in $X_{\Gamma}$.
\end{proof}

Note that all graph hypersurfaces of wheels $WS_h$ with $h$ spokes satisfy this Lemma.

\section{Examples: $\ast$-graphs}

At the beginning of this section we need to introduce a few conventions. We will call a basis $B \subseteq H_1(\Gamma)$ a cycle basis if it consists only 
of simple cycles. That such a basis exists is a standard fact in graph theory. Since the matrix $M_\Gamma$ associated to a graph $\Gamma$ 
depends on the chosen basis of $H_1(\Gamma)$ we will make this dependence explicit in this section by writing $M_{\Gamma,B}$.\\

A class of examples which have linearly independent entries in $M_{\Gamma,B}$ and which contains the wheels with $n$ spokes are the $*$-graphs: 

\begin{definition}
\label{Stargraph} A polygonal graph $\Gamma$ is a connected, not necessarily simple, graph which has a decomposition 
$\Gamma= \Delta_1 \cup \Delta_2 \cup \cdots \cup \Delta_h$ as a successive glueing (in the sense of topological spaces) along non-empty, connected sets of edges inside 
given cycles $\Delta_i$, and such that no edge is used twice for glueing. Let $E_0 \subset \Gamma$ be the union of all edges used for the glueing. 
A $*$-graph $\Gamma$ is a polygonal graph such that every such decomposition has the property $h_1(E_0) = 0$.
\end{definition}

Note that there are also other, but different, notions of polygonal graphs in the literature.

\begin{example} 
In the literature dealing with the motives of graph hypersurfaces one calls a connected graph $\Gamma$ a {\it banana graph} (denoted by $B_n$) 
if and only if it consists of exactly two vertices and $n$ edges connecting both vertices. This implies that $h_1(\Gamma) = n-1$.
The example of a banana graph with $n=4$ edges and $3$ loops shows that the condition $h_1(E_0) = 0$
depends on the glueing order. To see this, label the edges $1,\, 2,\, 3,\, 4$. This gives as candidates for cycles the graphs consisting of exactly two edges, 
e.g. $(1,\,2)$. Then $B_4 = ((1,\,2) \amalg_{\{2\}} (2,\, 3)) \amalg_{\{3\}} (3,\, 4)$. But also $B_4 = ((1,\,2) \amalg_{\{2\}} (2,\, 3)) \amalg_{\{2\}} (2,\, 4)$. 
Hence, $E_0 = \{2,\, 3\}$ (and $h_1(E_0) = 1$) in the first case and $E_0 = \{2\}$ (and $h_1(E_0) = 0$) in the second. 
This shows that we have to require that $h_1(E_0) = 0$ for all decompositions.
The matrix $M_{\Gamma,B}$ (corresponding to the basis $B$ obtained from the $3$ obvious loops) 
has linearly dependent entries for this graph. One can verify that the hypersurface corresponding to $B_4$ does not admit any non-trivial 
linear $\mathbb{G}_m$-operation. 
\end{example}

\begin{lemma}\label{MVGraphLemma} Assume that $\Gamma$ is a polygonal graph. \\
(i) If there is a decomposition with $h_1(E_0)=0$, then    
\[
h_1(\Gamma)=  \sharp \text{ cycles } \Delta_i=h.
\]
(ii) For all edges $e$ in $\Gamma$, one has 
\[
h_1(\Gamma \setminus e) < h_1(\Gamma). 
\]
\end{lemma}

We will call a graph satisfying (ii) a {\it homology model}. In the literature this is sometimes called $1$-particle irreducible 
without external edges \cite{BEK}. 
We prefer to call it a homology model, since this captures in a better way the topological nature of the definition.

\begin{proof} (i) We use the Mayer-Vietoris Theorem and induction on the number of cycles. Assume 
$\Gamma=\Gamma' \cup \Delta$, where $\Delta$ is a cycle. Then the intersection $\Gamma' \cap \Delta$ 
is a connected and contractible union of edges, in particular $h_1(\Gamma' \cap \Delta)=0$ and $h_0(\Gamma' \cap \Delta)=1$. Hence there is an isomorphism 
$H_1(\Gamma') \oplus H_1(\Delta) \cong H_1(\Gamma)$. \\
(ii) A trivial induction on the decomposition of a polygonal graph shows that $\Gamma \setminus e$ is still connected. 
Let $U$ be an open subset of $\Gamma$ which contains $\Gamma \setminus e$ and is homotopy equivalent to it. 
Also, let $V$ be a contractible open subset containing $e$. Then $U \cup V=\Gamma$, and the assertion follows from the Mayer-Vietoris
sequence for open coverings.   
\end{proof}

While it is natural to define $*$-graphs as polygonal graphs with an additional property, we remark that they form a subclass of planar graphs:

\begin{lemma} \label{poly_versus_planar}
A graph $\Gamma$ is polygonal if and only if it is planar and a homology model.
\end{lemma}

\begin{proof}
$\Gamma$ is polygonal if and only if $\Gamma = \coprod_{E_0} \Delta_i$, where all $\Delta_i$ are simple cycles, every edge belongs to at most two $\Delta_i$'s, 
and no edge in $E_0$ is used twice for glueing. 
This condition means that the set $\{\Delta_i\}$ contains a simple basis of the cycle space $H_1(\Gamma)$ of $\Gamma$. For the definition of a simple basis, 
see \cite[sect. 4.5]{Diestel}. 
Hence $\Gamma$ is planar by MacLane's planarity criterion \cite[Thm. 4.5.1]{Diestel}: a graph is planar if and only if $H_1(\Gamma)$ contains a simple basis. 

Conversely, consider a plane embedding $\Gamma \rightarrow \R^2$. Choose a compact disc $\Gamma \subseteq D \subseteq \R^2$ such that $\partial D \cap \Gamma = \emptyset$ 
(here $\partial$ means ''boundary of''). Define the equivalence relation $\sim$ on $D \times D$ by requiring $x \sim y$ if and only if $x$ and $y$ are connected by a 
path inside $D \backslash \Gamma$ or inside $\Gamma$. This gives a partition $D = \Gamma \cup A \cup \bigcup_{i=1}^{h_1(\Gamma)} \Delta_i^\circ$, where $A$ is the unique 
component with $\partial D \subseteq A$ and $\partial \Delta_i$ are cycles \cite[Prop. 4.2.6]{Diestel}. Then, 
$(\partial \Delta_1^\circ,\, \ldots,\, \partial\Delta_{h_1(\Gamma)}^\circ)$ is a cycle basis 
of $H_1(\Gamma)$, and no edge of $\Gamma$ lies in more than two $\partial \Delta_i$ \cite[Lem. 4.2.2]{Diestel}. 
Since $\Gamma$ is a homology model, every edge of $\Gamma$ is contained in some $\partial \Delta_i$. 
Hence, glueing in the given order shows that $\Gamma$ is polygonal. 
\end{proof}

\begin{definition}
We will call a simple cycle $\Delta \subseteq \Gamma$ an inner cycle of $\Gamma$ if there exist simple cycles $\Delta_2,\, \ldots,\, \Delta_{h_1(\Gamma)}$ such that 
$B := (\Delta,\, \Delta_2,\, \ldots,\, \Delta_{h_1(\Gamma)})$ is a cycle basis of $H_1(\Gamma)$ and 
$\Delta = \sum_{i=2}^{h_1(\Gamma)} \Delta \cap \Delta_i \in H_1(\Gamma, \mathbb{F}_2)$.
\end{definition}

\begin{lemma}\label{inner_cycles} A graph $\Gamma$ with no inner cycles is planar.%
\end{lemma}

\begin{proof}
Note that the class of graphs without inner cycles is closed under taking subgraphs and that (all subdivisions of) the complete bipartite graph $K_{3,\,3}$ and the complete
graph $K_5$ have inner cycles. 
Thus the assertion follows from Kuratowski's planarity criterion that states that a graph is planar if and only if it does not contain neither 
$K_{3,\,3}$ (complete bipartite graph) nor $K_5$ (complete graph) \cite[Thm. 4.4.6]{Diestel}.
\end{proof}

The converse does not hold, since a typical graph with an inner cycle is 

\tikzstyle{every node}=[circle, draw, fill=black!50,inner sep=0pt, minimum width=4pt]
\[
\begin{tikzpicture}[thick,scale=0.8]
	\draw (-3,0)--(3,0)--(0,3)--(-3,0)--(0,0)--(1.5,1.5)--(-1.5,1.5)--(0,0);
\end{tikzpicture}
\]
This graph is not a $\ast$-graph, as $E_0$ is the inner triangle. For $\ast$-graphs, the following characterization holds.

\begin{theorem}\label{polys}  
Let $\Gamma$ be a graph. Then the following conditions are equivalent:\\
(i) $\Gamma$ is a $\ast$-graph. \\
(ii) $\Gamma$ is a homology model, and there exists a cycle basis $B \subseteq H_1(\Gamma)$ such that the non-zero upper-triangular matrix entries 
$M_{ij}$ of $M_{\Gamma,B}$ are linearly independent polynomials in $k[X_1,\, \ldots,\, X_n]_1$.
\end{theorem}

\begin{proof} 
(ii) $\Rightarrow$ (i): We will first show that $\Gamma$ is planar. To this end, we show that
$\Gamma$ has no inner cycles, hence $\Gamma$ is planar by Lemma~\ref{inner_cycles}.
Suppose $\Gamma$ has inner cycles. This means that, in addition to the cycle basis $B = (\Delta_1,\, \ldots,\, \Delta_{h_1(\Gamma)})$, there is 
another cycle basis 
$B' = (\Delta_1',\, \ldots,\, \Delta_{h_1(\Gamma)}')$ of $H_1(\Gamma,\, \mathbb{F}_2)$ such that 
\[
\Delta_1' = \sum_{i=2}^{h_1(\Gamma)} \Delta_1' \cap \Delta_i'.
\] 
In the special case where $B=B'$, this relation immediately leads to a linear dependence between the matrix entry $M_{11}$ and 
other entries in the first row or column, and hence contradicts the assumption. 

In general, since $GL(H_1(\Gamma,\, \mathbb{F}_2))$ is generated by transvections, we can always 
find $t \in GL(H_1(\Gamma,\, \mathbb{F}_2))$ such that $t(B) = B'$, and $t$ is product $t=t_1 \cdots t_l$ of transvections. 
In addition, we will now show that we can reduce to the case where $t_i\cdots t_l(B)$ is a cycle basis for all $i$. 
In the following, we shall do only one iteration of the reduction, since one obtains the full reduction by simply repeating this step. Hence, assume that 
$\Delta_i = \Delta_i'$ for all $i > 1$ and $\Delta_1' = \sum_{i=1}^{h_1(\Gamma)} \alpha_i \Delta_i$, with $\alpha_1 = 1$. For $i= 2, \ldots,\, h_1(\Gamma)$ 
define $t_i = 1 + \alpha_i E_{1i}$, where $E_{1i}$ is the matrix with $1$ at entry $(1,\,i)$ and $0$ else. Then $t(B) = B'$, where $t = \prod_{i=2}^{h_1(\Gamma)} t_i$. 
Note that the $t_i$ commute pairwise. Suppose (after reordering if necessary) for some $i > 2$ (if $i=2$ we are done) that $t_j \cdots t_{h_1(\Gamma)}(B)$ is a cycle 
basis for all $i \leq j$. Then there exists $2 \leq k < i$ such that $t_i \cdots t_{h_1(\Gamma)}(\Delta_1)$ shares edges with the cycle $\Delta_{k}$, since otherwise 
$\Delta_1'$ would not be a simple cycle. Now, swap the indices of $t_{i-1}$ and $t_k$ and proceed inductively. 

Having shown this reduction for $t(B)=B'$, and assuming $B \neq B'$, this reduces us without loss to the situation 
$\Delta_1' = \Delta_1 + \Delta_2$, $\Delta_j'=\Delta_j$ for $j \ge 2$, and $\Delta_1 \cap \Delta_2 \neq \emptyset$. 
Hence, $\Delta_1' \cap \Delta_j = (\Delta_1 \cap \Delta_j)+(\Delta_2 \cap \Delta_j)$ 
for all $j$. In particular, $\Delta_1' \cap \Delta_2 = (\Delta_1 \cap \Delta_2) + \Delta_2$.

This implies that the relation 
\[
\Delta_1' = \sum_{i=2}^{h_1(\Gamma)} \Delta_1' \cap \Delta_i'= \Delta_1' \cap \Delta_2 + \sum_{i=3}^{h_1(\Gamma)} \Delta_1' \cap \Delta_j
\]
from the beginning yields the equation
\[
\Delta_1=\Delta_1 \cap \Delta_2 + \sum_{j \ge 3} (\Delta_1+\Delta_2) \cap \Delta_j. 
\]
This is a non-trivial relation among the elements of 
\[
\{\Delta_1,\, \ldots,\, \Delta_{h_1(\Gamma)},\, \Delta_i \cap \Delta_j \colon \forall i,\, j\},
\]
i.e., matrix entries of $M$, a contradiction. 

Hence, $\Gamma$ is planar, and therefore polygonal by Lemma~\ref{poly_versus_planar}.  
Assume $\Gamma= \Delta_1 \cup \Delta_2 \cup \cdots \cup \Delta_h$, but $h_1(E_0)>0$. 
Let $\Delta_1,...,\Delta_h$ be the natural basis of $H_1(\Gamma)$ given by the cycles $\Delta_i$.  
Given a simple non-zero loop $\gamma \subset E_0$, there is a linear relation between the diagonal entries for all 
$\Delta_i$ meeting $\gamma$ and all off-diagonal entries carrying glueing data for these $\Delta_i$. 

(i) $\Rightarrow$ (ii): Conversely, suppose that $\Gamma$ is a $*$-graph and we have given a linear relation among the entries of $M_{\Gamma,B}$. 
By definition of $*$-graphs, this relation 
involves a diagonal element, since every edge is only used once for glueing. Hence, we get an equation
\[
\sum_{i=1}^h a_i M_{ii}= \sum_{i<j} b_{ij} M_{ij}, 
\]
with at least one $a_i$ and one $b_{ij}$ non-zero by Lemma \ref{MatrixStructure}. This is a contradiction, since 
each $\Delta_i$ occurring on the left with $a_i \neq 0$ has an edge which is not contained in $E_0$.
\end{proof}

\begin{corollary} The $\ast$-graphs admit a torus operation of dimension $r \ge h-1+n-\ell(M_{\Gamma,B})$. It is faithful under the condition given in 
Theorem~\ref{Existence}, i.e., if the graph hypersurface is not a union of $h$ linear hyperplanes. 
\end{corollary}

\begin{proof} By Prop.~\ref{polys}, the entries of $M_{\Gamma,B}$ satisfy the assumptions of Thm.~\ref{Existence}.  
\end{proof}

\section{Motivic Bialynicki-Birula decompositions}

In this section we discuss how to apply high dimensional torus operations on $X_\Gamma$ to compute the motive of a 
graph hypersurface $X_\Gamma=\{\det(M_\Gamma)=0\}$ using a motivic version of the decomposition theorem of Bialynicki-Birula \cite{BB}.
For simplicity assume that $k$ is algebraically closed and of characteristic zero. 

In the following we use (cohomological) motives $M(X)$ in the sense of Voevodsky's triangulated category $DM(k)=DM_{gm}(k)$ attached to any $k$-scheme $X$. 
The motive $M(X)$ for a possibly singular variety $X$ is defined in \cite[chap. 5]{V}.
We want to give a criterion when the motive of a graph hypersurface $M(X_\Gamma) \in DM(k)$ is mixed Tate. 
An object $M \in DM(k)$ is called mixed Tate, if it is in the image of 
\[
DMT(k) \rightarrow DM(k) \otimes \Q,
\]
where $DMT(k)$ is the full $\Q$-linear triangulated subcategory of $DM(k) \otimes \Q$ generated by the Tate objects $\Q(n)$ as 
defined by Levine \cite{L}. 

\begin{example}
The simplest example which is not entirely trivial is $\Gamma = WS_3$, the wheel with $3$ spokes.
The graph hypersurface $X_\Gamma$ for $\Gamma= WS_3$ is isomorphic to ${\rm Sym}^2 \P^2$, and admits a $2$-dimensional torus operation. 
The motive of $X_\Gamma$ is mixed Tate by \cite[Sect. 9]{B}.
\end{example}

In view of the classical Bialynicki-Birula theorem \cite{BB} and its motivic versions \cite{Br,K},
one might expect that the motive of $X_\Gamma$ should be determined by the components $F$ of the fixed point set, 
if $X_\Gamma$ carries a non-trivial torus operation. In the smooth case, the theorem of Bialynicki-Birula takes the form
\[
M(X) \cong \bigoplus_F M(F)(n_F)
\]
in $DM(k)$ with appropriate Tate twists $n_F$ depending on each $F$. 
In the presence of singularities, we have to use equivariant cubical hyperresolutions to obtain a useful version ob Bialynicki-Birula's theorem.
The idea is to replace a singular variety $X$ by a simplicial variety $X_\bullet \to X$ with smooth components $X_\alpha$ and 
an equivariant torus operation on each $X_\alpha$. 

\begin{proposition} 
For every integral closed subvariety $X \subset \P^{n-1}$ with an algebraic operation of a torus $T$, there is an equivariant cubical hyperresolution 
\[
X_\bullet \longrightarrow X 
\]
in the sense of \cite{GN}. Every component $X_\alpha$ in the hyperresolution $X_\bullet$ can be chosen 
smooth and projective. The motive $M(X) \in DM(k)$ can be obtained from $X_\bullet$ by descent, i.e., 
the morphism $M(X) \to M(X_\bullet)$ is an isomorphism.
\end{proposition}

\begin{proof} See \cite{GN} or \cite[Thm. 5.2.6]{PS} for the explicit construction of a cubical hyperresolution via a resolution of singularities. 
The construction is inductive, and in each step some varieties are replaced by several smooth components.  
Levine has used this in the context of motives, see for example \cite[Thm.~3.2.5, pg. 246]{L2}. For $X$ one has now two motives: $M(X)$ as defined in \cite[chap. 5]{V}, 
and $M(X):=M(X_\bullet)$ as defined in Levine. However, there is a descent statement for the cdh-topology in $DM(k)$ \cite[chap.5, sect.4]{V}, 
and this implies, by inductive application in the abtract blow-up squares of a cubical hyperresolution, 
that $M(X)$ and $M(X_\bullet)$ are isomorphic in the triangulated category $DM(k)$. 
A resolution of singularities, hence a cubical hyperresolution, can be made equivariant using equivariant resolution of singularities, see \cite{W}.
\end{proof}

\begin{example} A nice example with a torus operation 
is the nodal rational curve $C$ with desingularization $\P^1$, where 
the points $0$ and $\infty$ on $\P^1$ are identified to the singular point $P$ in $C$. 
The associated cube is the square
\[
\begin{matrix} \{0,\infty\} & \longrightarrow & \{P\}  \cr 
\downarrow & & \downarrow \cr
\P^1 & \longrightarrow & C.
\end{matrix}
\]  
Over a perfect field of positive characteristic, alterations in the sense of de Jong 
give another way of constructing such a hyperresolution.    
\end{example}

We assume that we are in this situation now.

\begin{proposition} \label{DM} Assume that all fixed point loci in all smooth, proper components $X_\alpha$ of 
$X_\bullet$ induce mixed Tate motives $M(X_\alpha)$. Then $M(X)$ is mixed Tate.
\end{proposition}

\begin{proof} All components $X_\alpha$ in the cubical hyperresolution give a mixed Tate motive $M(X_\alpha)$ by assumption. The arrows in 
the simplicial variety $X_\bullet$ are contained in the full subcategory $DMT(k)$. Hence 
$M(X_\bullet)$ descends to a mixed Tate motive $M(X)$. 
\end{proof}

Proposition \ref{DM} reduces the complexity of the motive of $X_\Gamma$ with this method to that of the fixed point 
loci in some resolution of singularities. This method should 
be successful provided there is some sufficiently high dimensional torus operation. 

\begin{example} Let us revisit $\Gamma = WS_3$, the wheel with $3$ spokes.
The graph hypersurface $X_\Gamma$ for $\Gamma= WS_3$ is isomorphic to ${\rm Sym}^2 \P^2$, which has a 
resolution by a single blow-up of the diagonal. By Lemma~\ref{Fixpoints}, the fixed point locus ${\rm Fix}_{\P^{5}}(T)$ consists of points, hence
$M(X_\Gamma)$ is a mixed Tate motive. 
\end{example}

However, besides the wheel with $3$ spokes, we do not have many examples.
Note that the equivariant resolution of a singular hypersurface $X$ can have a fixed point set which is a not mixed Tate motive, even if
the fixed point set in $X$ consists of isolated points. The cone over an elliptic curve gives such an example. We ask 
the following question:

\begin{question} Assume that $X=X_\Gamma$ is a graph hypersurface with algebraic torus operation. 
Is there always an equivariant cubical hyperresolution $X_\bullet$, 
such that each smooth stratum $X_\alpha$ is defined in graph theoretic terms, and the fixed point loci for the torus operation on $X_\alpha$ can be computed 
in terms of graph invariants ? 
\end{question}

{\bf Acknowledgment:} This work was supported by Sonderforschungsbereich TRR 45 of Deutsche Forschungsgemeinschaft.  
We thank S. Bloch, M. Brion, P. Brosnan, H. Esnault, M. Lehn, M. Levine, V. Welker and J. Winkelmann for discussions, and 
the referees for many helpful suggestions, and improvements.

\end{document}